\theoremstyle{plain}
\newtheorem{theorem}{Theorem}
\newtheorem{proposition}{Proposition}
\newtheorem{corollary}{Corollary}
\theoremstyle{definition}
\newtheorem{definition}{Definition}
\theoremstyle{remark}
\newtheorem{example}{Example}
\begin{document}

\title{Groups of Invertible Binary Operations of a Topological Space}
\author{Pavel S. Gevorgyan}

\address{Moscow Pedagogical State University}

\email{pgev@yandex.ru}

\begin{abstract}
In this paper, continuous binary operations of a topological space are studied and a criterion of their invertibility is proved. The classification problem of groups of invertible continuous binary operations of locally compact and locally connected spaces is solved. A theorem on the binary distributive representation of a topological group is also proved.
\end{abstract}

\keywords{Binary operation; topological group; groups of homeomorphisms}
\subjclass{54H15, 22A25}

\maketitle

\section{Notation and auxiliary results}

Throughout this paper, by a space we mean a topological space. All spaces are assumed to be Hausdorff. 

By $C(X,Y)$ we denote the space of all continuous maps of the space $X$ to space $Y$, endowed with the compact-open topology, that is, the topology generated by the subbase consisting of all sets of the form $W(K, U)=\{f:X\to Y; \ f(K)\subset U\}$, where $K$ is a compact subset of $X$ and $U$ is an open subset of $Y$.  All spaces of maps are considered in the compact-open topology. 

If $G$ is a topological group, then there is a natural group operation on $C(X,G)$: given any continuous maps $f,g\in C(X,G)$, their product $fg\in C(X,G)$ is defined by formula $(fg)(x)=f(x)g(x)$ for all $x\in X$.

\begin{theorem}[\cite{McCoy}]\label{th_McCoy}
If $G$ is a topological group, then so is $C(X,G)$.
\end{theorem}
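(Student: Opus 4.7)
The plan is to establish the two defining axioms of a topological group for $C(X,G)$ with the pointwise operations: namely, that multiplication and inversion are continuous with respect to the compact-open topology. As a preliminary I would check that $fg$ and $f^{-1}$ actually land in $C(X,G)$, which is immediate from composing the continuous map $(f,g)\colon X\to G\times G$ (respectively $f\colon X\to G$) with multiplication (respectively inversion) on $G$; the group axioms in $C(X,G)$ then hold pointwise.

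Inversion $\iota\colon C(X,G)\to C(X,G)$, $f\mapsto f^{-1}$, is the easy half. Given a subbasic open set $W(K,U)$ I would observe that $\iota^{-1}\bigl(W(K,U)\bigr)=W(K,U^{-1})$, where $U^{-1}=\{u^{-1}:u\in U\}$ is open in $G$ because inversion on $G$ is a homeomorphism. Thus $\iota$ is continuous.

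The main work is showing continuity of multiplication $\mu_*\colon C(X,G)\times C(X,G)\to C(X,G)$. Fix $(f_0,g_0)$ and a subbasic neighborhood $W(K,U)$ of $f_0g_0$. For each $x\in K$, continuity of multiplication on $G$ at $(f_0(x),g_0(x))$ supplies open sets $A_x\ni f_0(x)$ and $B_x\ni g_0(x)$ with $A_xB_x\subset U$. The sets $V_x=K\cap f_0^{-1}(A_x)\cap g_0^{-1}(B_x)$ form an open cover of $K$; extract a finite subcover $V_{x_1},\dots,V_{x_n}$. Since $K$ is compact Hausdorff, hence normal, this cover admits a closed shrinking $K=L_1\cup\cdots\cup L_n$ with compact $L_i\subset V_{x_i}$. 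Then $\mathcal{U}=\bigcap_iW(L_i,A_{x_i})$ and $\mathcal{V}=\bigcap_iW(L_i,B_{x_i})$ are open neighborhoods of $f_0$ and $g_0$ in the compact-open topology, and any $(f,g)\in\mathcal{U}\times\mathcal{V}$ satisfies $f(x)g(x)\in A_{x_i}B_{x_i}\subset U$ for each $x\in L_i$, so $fg\in W(K,U)$.

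The step I expect to be the main obstacle is the shrinking argument; it is where compactness of $K$ must be leveraged beyond the mere existence of a finite subcover, and it is what forces the replacement of the open $V_{x_i}$ by compact subsets suitable for use as the first coordinate of subbasic sets $W(\cdot,\cdot)$. Everything else is a formal manipulation of the compact-open subbasis together with the continuity of the group operations on $G$.
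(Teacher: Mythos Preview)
The paper does not supply its own proof of this theorem: it is quoted as a known result from McCoy--Ntantu \cite{McCoy} and used as a black box later on. So there is nothing in the paper to compare your argument against.

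That said, your argument is correct and is essentially the standard direct verification. The inversion step is exactly right, and for multiplication your use of a closed (hence compact) shrinking of the finite open cover of $K$ is a legitimate way to produce compact sets $L_i$ suitable for the subbasic neighborhoods $W(L_i,\cdot)$; this relies on $K$ being compact Hausdorff, which is guaranteed here since the paper assumes all spaces are Hausdorff. One could alternatively bypass the shrinking by identifying $C(X,G)\times C(X,G)$ with $C(X,G\times G)$ and invoking continuity of post-composition with the multiplication map $G\times G\to G$, but your hands-on route is equally valid and arguably more self-contained.
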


The group of all homeomorphisms of $X$ is denoted by $H(X)$. Generally, this group is not a topological
group. However, the following theorem holds.

\begin{theorem}[\cite{Arens}]\label{th_arens}
If $X$ is a locally compact and locally connected space, then $H(X)$ is a topological group.
\end{theorem}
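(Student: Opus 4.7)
The plan is to establish that the two group operations on $H(X)$ --- composition and inversion --- are continuous in the compact-open topology. For composition, I would appeal to the standard fact that when $Y$ is locally compact Hausdorff, the composition map $C(Y,Z)\times C(X,Y)\to C(X,Z)$ is jointly continuous in the compact-open topology; in the case $X=Y=Z$, restricting to $H(X)\times H(X)$ immediately yields continuity of the group multiplication. Only local compactness is used for this step.

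The substantive point is continuity of inversion $h\mapsto h^{-1}$, where both hypotheses are essential. Fix $h_{0}\in H(X)$ together with a subbasic neighborhood $W(K,U)$ of $h_{0}^{-1}$, so $K\subset X$ is compact, $U\subset X$ is open, and $K\subset h_{0}(U)$. The goal is to produce a neighborhood $N$ of $h_{0}$ in $H(X)$ such that $K\subset g(U)$, i.e.\ $g^{-1}\in W(K,U)$, for every $g\in N$.

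To construct $N$, I would use local compactness and local connectedness twice over. First, cover the compact set $K$ by finitely many \emph{connected} open sets $V_{1},\dots,V_{n}$ with compact closures $\bar V_{i}\subset h_{0}(U)$. Second, for each $i$ choose an open $W_{i}$ with $h_{0}^{-1}(\bar V_{i})\subset W_{i}\subset\bar W_{i}\subset U$ and $\bar W_{i}$ compact; this forces $h_{0}(\partial W_{i})\cap\bar V_{i}=\emptyset$, where $\partial W_{i}=\bar W_{i}\setminus W_{i}$ is compact. Finally, fix $y_{i}\in V_{i}$ and set $x_{i}=h_{0}^{-1}(y_{i})\in W_{i}$. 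Define
\[
N=\bigcap_{i=1}^{n}\bigl(W(\partial W_{i},X\setminus\bar V_{i})\cap W(\{x_{i}\},V_{i})\bigr)\cap H(X),
\]
which is a neighborhood of $h_{0}$ by construction.

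The main obstacle, and the place where local connectedness is used indispensably, is verifying $V_{i}\subset g(W_{i})$ for every $g\in N$. The key observation is that the condition $g(\partial W_{i})\cap\bar V_{i}=\emptyset$ forces $g(\bar W_{i})\cap V_{i}=g(W_{i})\cap V_{i}$. This set is open in $V_{i}$ (since $g$ is a homeomorphism and $W_{i}$ is open) and simultaneously closed in $V_{i}$ (as the intersection of $V_{i}$ with the compact, hence closed, set $g(\bar W_{i})$), and it is nonempty because $g(x_{i})\in V_{i}$. Connectedness of $V_{i}$ then forces $g(W_{i})\cap V_{i}=V_{i}$, so $V_{i}\subset g(W_{i})\subset g(U)$, and taking the union over $i$ yields $K\subset g(U)$, as required.
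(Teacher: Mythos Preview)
The paper does not contain a proof of this statement: Theorem~\ref{th_arens} is quoted as a background result from Arens~\cite{Arens}, so there is nothing in the paper to compare your argument against directly.

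That said, your proof is correct and is essentially Arens' original argument. The reduction of continuity of composition to the standard exponential-law fact for locally compact $Y$ is fine. For inversion, your clopen-subset trick in each connected $V_i$ is exactly the right mechanism, and all the auxiliary choices (the connected $V_i$ with compact closure inside $h_0(U)$, the $W_i$ sandwiched between $h_0^{-1}(\bar V_i)$ and $U$ with compact closure, the witness points $x_i$) are legitimate under the stated hypotheses. The one place to be slightly careful is the implication $h_0^{-1}(\bar V_i)\subset W_i\Rightarrow h_0(\partial W_i)\cap\bar V_i=\emptyset$: this uses injectivity of $h_0$, which of course you have.

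It is worth noting that the paper does deploy the \emph{same} connectedness idea in its proof of Theorem~\ref{th011}, where continuity of the inverse binary operation $f^{-1}(t,x)=f_t^{-1}(x)$ is established. There the roles are: $K$ is a compact connected neighborhood of $x_0$, $W$ plays the role of your $W_i$, and the argument shows $f_t^{-1}(K)\subset W\cup\overline{W}^{\,C}$ and then invokes connectedness of $f_t^{-1}(K)$ together with a witness point to force $f_t^{-1}(K)\subset W$. So while the paper outsources Theorem~\ref{th_arens} itself, the technique you used is precisely the one the author relies on elsewhere.
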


The symmetric group on a set $X$ is denoted by $S(X)$. In the case where $X$ is a finite set, this group is denoted by $S_n(X)$ or $S_n$, where $n$ is the number of elements in $X$. The order of the group $S_n(X)$ is equal to $n!$: $|S_n(X)|=n!$ 

A detailed exposition on the above used notions and results, as well as on other definitions, notions and results, used in this paper without reference, can be found in \cite{Br}--\cite{Gev2}.

\section{Continuous binary operations of topological spaces.}

Let $X$ be a topological space. A continuous map $f:X^2\to X$ is called a continuous binary operation on the space $X$. The set of all continuous binary operations on $X$ we denote by $C_2(X)$. A composition of two binary operations $f, \varphi \in C_2(X)$ is defined by formula:
\begin{equation}\label{eq4}
(f\circ\varphi) (t,x)=f(t, \varphi(t,x)),
\end{equation}
where $t,x\in X$.

If $f:X^2\to X$ is a continuous binary operation, then for every$t\in X$ we define a continuous map
$f_t:X\to X$ by formula:
ft(x) = f(t,x). %
\begin{equation}\label{eq5}
f_t(x)=f(t,x).
\end{equation}

Observe that a continuous binary operation $f:X^2\to X$ can be considered as a family of continuous maps $\{f_t\}$:   $f=\{f_t\}$, which continuously depends on the index $t\in X$. In these notation, the composition of two binary operations $f=\{f_t\}$ and $\varphi=\{\varphi_t\}$, defined in \eqref{eq4}, becomes 
$$f\circ \varphi = \{f_t\circ \varphi_t\},$$
explaining the meaning of formula \eqref{eq4}.

\begin{proposition}
The space $C_2(X)$ is a semigroup with identity element $e(t,x)=x$, that is, a monoid with respect to composition of binary operations.
\end{proposition}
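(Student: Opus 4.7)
The plan is to verify the three monoid axioms---closure under composition, associativity, and existence of a two-sided identity---working throughout with the fiberwise description $f=\{f_t\}_{t\in X}$ already introduced, under which composition takes the convenient form $(f\circ\varphi)_t=f_t\circ\varphi_t$.

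For closure, I would observe that if $f,\varphi\in C_2(X)$, then $f\circ\varphi$ equals the composition of the continuous map $(t,x)\mapsto(t,\varphi(t,x))$ from $X^2$ to $X^2$ (continuous because each coordinate is) with the continuous map $f$. Hence $f\circ\varphi$ is continuous and lies in $C_2(X)$.

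Associativity then reduces at each index $t$ to the ordinary associativity of composition of self-maps of $X$. Explicitly, for any $f,\varphi,\psi\in C_2(X)$, both $(f\circ\varphi)\circ\psi$ and $f\circ(\varphi\circ\psi)$ send $(t,x)$ to $f(t,\varphi(t,\psi(t,x)))$, which is the whole content of this step and a direct unfolding of definition \eqref{eq4}.

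Finally, $e(t,x)=x$ is clearly continuous, so $e\in C_2(X)$, and since $e_t$ is the identity self-map of $X$ for every $t$, the equalities $f\circ e=f$ and $e\circ f=f$ follow at once in fiberwise form. There is no substantive obstacle anywhere in the argument; the only point worth flagging is that the identity element is the second-coordinate projection $(t,x)\mapsto x$ rather than a map depending non-trivially on $t$, which is forced by the asymmetry in \eqref{eq4}: composition acts only in the second variable, and $t$ serves merely as an index for the family $\{f_t\}$.
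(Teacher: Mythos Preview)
Your proof is correct and follows exactly the approach the paper indicates: a direct verification of the monoid axioms (closure, associativity, two-sided identity), which the paper omits as routine. There is nothing to add.
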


The proof follows by checking the semigroup axioms, and so is omitted.

\begin{definition}
A continuous binary operation $f\in C_2(X)$ is said to be \textit{invertible} if there exists a continuous binary operation $f^{-1}\in C_2(X)$ such that
\begin{equation*}
f\circ  f^{-1}=f^{-1}\circ  f=e.
\end{equation*}
In this case, $f$ and $f^{-1}$ are said to be \textit{mutually inverse binary operations}.
\end{definition}

The subset of all invertible elements of the set $C_2(X)$ we denote by $H_2(X)$. Thus, $H_2(X)$ is a group.

\begin{example}
Let $X=\{a, b\}$ be a two-point discrete space. The symmetric group  $S_2(X)$ of permutations of this space is the cyclic group $\mathbb{Z}_2$, and the group of all invertible binary operations on $X=\{a, b\}$ is the group of order 4 with two generators $\varphi_1$ and $\varphi_2$, which are specified as follows:

\begin{center}
\begin{tabular}{|p{30pt}|p{30pt}|p{30pt}|p{30pt}|}
\hline
\raisebox{-2.80ex}[0cm][0cm]{$\varphi_1$:}&
$b$&
$a$&
 $a$ \\
\cline{2-4}&
$a$&
$b$&
$b$ \\
\cline{2-4}&
 &
$a$&
$b$ \\
\hline
\end{tabular} \quad 
\begin{tabular}{|p{30pt}|p{30pt}|p{30pt}|p{30pt}|}
\hline
\raisebox{-2.80ex}[0cm][0cm]{$\varphi_2$:}&
$b$&
$b$&
 $a$ \\
\cline{2-4}&
$a$&
$a$&
$b$ \\
\cline{2-4}&
 &
$a$&
$b$ \\
\hline
\end{tabular}\vspace{2mm}
\end{center}
This, as it is known, is the Klein four-group.

In the case of a three-point set $X$, the order of the group $H_2(X)$ is equal to $(3!)^3=216$ (see Corollary \ref{cor2} below).
\end{example}

We have the following result, the proof of which is not difficult, and so is omitted.

\begin{theorem}\label{th01}
If a continuous binary operation $f=\{f_t\} \in C_2(X)$ is invertible, then the continuous map $f_t:X\to X$ defined by \eqref{eq5} is a homeomorphism for any $t\in X$, and $f^{-1}=\{f^{-1}_t\}$.
\end{theorem}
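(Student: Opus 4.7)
The plan is to simply unpack the definition of invertibility coordinate-wise in $t$, reducing everything to the statement that two ordinary continuous self-maps of $X$ which compose (both ways) to the identity are mutually inverse homeomorphisms.

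First I would set $g=f^{-1}\in C_2(X)$ and translate the two equations $f\circ g=e$ and $g\circ f=e$ via the composition formula \eqref{eq4}. They become, for all $t,x\in X$,
\begin{equation*}
f(t,g(t,x))=x\qquad\text{and}\qquad g(t,f(t,x))=x.
\end{equation*}
Recalling the notation \eqref{eq5}, for any fixed $t\in X$ these identities read $f_t\circ g_t=\mathrm{id}_X$ and $g_t\circ f_t=\mathrm{id}_X$.

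Next I would note that each partial map $f_t$ is continuous: the inclusion $j_t\colon X\to X^2$, $x\mapsto(t,x)$, is continuous, and $f_t=f\circ j_t$; similarly $g_t$ is continuous. Since $f_t$ and $g_t$ are mutually inverse continuous maps of $X$ onto $X$, each is a homeomorphism, and $(f_t)^{-1}=g_t$. In the family notation this is exactly the asserted equality $f^{-1}=\{f_t^{-1}\}$.

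I do not expect a real obstacle here: the statement is essentially a bookkeeping translation of the definition of the composition \eqref{eq4} once the $t$-parameter is frozen. The only point that warrants a sentence is the continuity of the partial maps $f_t$ and $g_t$, which the author likely takes for granted but which is what legitimizes passing from ``two-sided inverse continuous family'' to ``homeomorphism for each $t$.''
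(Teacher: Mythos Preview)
Your proposal is correct. The paper itself omits the proof of this theorem, remarking only that it ``is not difficult,'' so there is no argument to compare against; your unpacking of the composition formula \eqref{eq4} and the identity $e(t,x)=x$ into the slice identities $f_t\circ g_t=\mathrm{id}_X=g_t\circ f_t$ is precisely the routine verification the author has in mind.
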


The converse of Theorem \ref{th01} is true for locally compact and locally connected spaces.

\begin{theorem}\label{th011}
Let $X$ be a locally compact and locally connected space, and let $f=\{f_t\} :X^2\to X$ be a continuous binary operation. If the map $f_t:X\to X$ is a homeomorphism for every $t\in X$, then the binary operation $f=\{f_t\}$ is invertible, and $f^{-1}=\{f^{-1}_t\}$.
\end{theorem}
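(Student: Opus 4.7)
The natural candidate for the inverse is $g(t,x) = f_t^{-1}(x)$, since the algebraic identities $f \circ g = g \circ f = e$ are then immediate from \eqref{eq4}: indeed $(f \circ g)(t,x) = f(t, f_t^{-1}(x)) = f_t(f_t^{-1}(x)) = x$, and symmetrically for $g \circ f$. Thus the entire content of the theorem is to verify that $g$ is a continuous map $X^2 \to X$, so that $f^{-1} = g$ belongs to $C_2(X)$.

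To establish continuity of $g$, the plan is to exploit the exponential correspondence together with Arens' theorem (Theorem~\ref{th_arens}). Since $X$ is locally compact Hausdorff, the compact-open topology on $C(X,X)$ is exponential: continuity of $f : X \times X \to X$ is equivalent to continuity of its adjoint $\widehat{f} : X \to C(X,X)$, $t \mapsto f_t$. By hypothesis each $f_t$ is a homeomorphism, so $\widehat{f}$ in fact takes values in $H(X) \subset C(X,X)$. By Theorem~\ref{th_arens}, $H(X)$ is a topological group in the (subspace) compact-open topology, so the inversion map $\iota : H(X) \to H(X)$, $h \mapsto h^{-1}$, is continuous. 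Composing, the map
\begin{equation*}
X \xrightarrow{\widehat{f}} H(X) \xrightarrow{\iota} H(X) \hookrightarrow C(X,X), \qquad t \mapsto f_t^{-1},
\end{equation*}
is continuous.

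It remains to pass back from the adjoint to $g$ itself. The evaluation map $\mathrm{ev} : C(X,X) \times X \to X$, $(h,x) \mapsto h(x)$, is continuous since $X$ is locally compact Hausdorff. Therefore
\begin{equation*}
g(t,x) = f_t^{-1}(x) = \mathrm{ev}\bigl(\iota(\widehat{f}(t)),\, x\bigr)
\end{equation*}
is the composition of continuous maps, hence continuous on $X^2$. This gives $g \in C_2(X)$, and combined with the algebraic identities above we conclude $f^{-1} = g = \{f_t^{-1}\}$.

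The only subtle step is the middle one, where local compactness and local connectedness enter twice: once to invoke Arens' theorem so that inversion on $H(X)$ is continuous, and once through local compactness alone to make the exponential law work in both directions. Without local compactness, continuity of $\widehat{f}$ would not follow from continuity of $f$, and without local connectedness we could not guarantee that $\iota$ is continuous on $H(X)$ equipped with the compact-open topology; both features of $X$ are therefore used essentially.
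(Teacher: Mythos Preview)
Your proof is correct and considerably more streamlined than the paper's. The paper establishes continuity of $g(t,x)=f_t^{-1}(x)$ by a direct point-set argument: fixing $(t_0,x_0)$ and a relatively compact neighborhood $W$ of $y_0=f_{t_0}^{-1}(x_0)$, it selects a compact \emph{connected} neighborhood $K$ of $x_0$ with $f_{t_0}^{-1}(K)\subset W$, shows $f_t(W^C\cap\overline W)\subset K^C$ for $t$ near $t_0$, and then uses connectedness of $f_t^{-1}(K)$ to trap it inside $W$. This is essentially Arens' original argument carried out in situ. Your route instead packages the same content as the exponential law together with Theorem~\ref{th_arens} used as a black box: curry $f$ to a continuous $\widehat f:X\to H(X)$, compose with inversion (continuous because $H(X)$ is a topological group), and uncurry via the evaluation map. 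The gain is brevity and a clear explanation of where each hypothesis enters; the cost is dependence on the exponential machinery, which the paper's self-contained argument avoids.

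One small inaccuracy in your closing commentary: the passage from continuity of $f$ to continuity of $\widehat f$ does \emph{not} require local compactness of $X$; that direction holds in general by the tube lemma. Local compactness is needed only for continuity of the evaluation map $C(X,X)\times X\to X$, i.e., for the uncurrying step at the end. This does not affect the validity of your argument, only the attribution of which hypothesis is doing what.
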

    
\begin{proof}
Consider the binary operation $f^{-1}$ given by $f^{-1}(t,x)=f_t^{-1}(x)$, and show that it is a continuous
inverse to $f:X^2\to X$.

We first establish the continuity of the map $f^{-1}:X^2\to X$. Let $(t_0,x_0)\in X^2$ be an arbitrary point,
and let $f^{-1}(t_0,x_0)=f_{t_0}^{-1}(x_0)=y_0$. Let $W\subset X$ be an arbitrary open neighborhood of the point $y_0$ such that the closure $\overline{W}$ is compact. Then there exists a compact connected neighborhood $K$ of the point $x_0$ for which
\begin{equation}\label{KW}
f_{t_0}^{-1}(K)\subset W.
\end{equation}
Denote by $K^{\circ}$ the interior of the set $K$, and observe that 
\begin{equation}\label{xoinko}
f_{t_0}(y_0)=x_0\in K^{\circ}.
\end{equation}

It follows from \eqref{KW} that
\begin{equation}\label{dop}
f_{t_0}(W^C\cap \overline{W})\subset K^C,
\end{equation}
where $W^C$ and $K^C$ are the complements of the sets $W$ and $K$, respectively.

Next, since $f:X^2\to X$ is a continuous binary operation, $y_0$ and $W^C\cap \overline{W}$ are compact, and $K^{\circ}$ and $K^C$ are open subsets of the space $X$, it follows from \eqref{xoinko} and \eqref{dop} that there exists an open neighborhood $U$ of the point $t_0$, such that for every $t\in U$
\begin{equation}\label{xoinko1}
f_{t}(y_0)\in K^{\circ}
\end{equation}
and
\begin{equation*}\label{dop1}
f_{t}(W^C\cap \overline{W})\subset K^C.
\end{equation*}
Hence 
\begin{equation*}\label{dop2}
K\subset f_{t}(W\cup \overline{W}^C)
\end{equation*}
for any $t\in U$. Therefore
$$f_{t}^{-1}(K)\subset W\cup \overline{W}^C.$$
Since $f_{t}^{-1}(K)$ is a connected set, and $W$ and $\overline{W}^C$ are disjoint open sets, it follows from the last inclusion that $f_{t}^{-1}(K)$ is contained in one of the sets $W$ and $\overline{W}^C$. However, it is clear that in view of \eqref{xoinko1} we have $f_{t}^{-1}(K)\subset W$. Therefore, for all $t\in U$ 
\begin{equation}\label{ko}
f_{t}^{-1}(K^\circ)\subset W.
\end{equation}
Thus, for an arbitrary open neighborhood $W$ of the point $y_0=f_{t_0}^{-1}(x_0)$, we have found open neighborhoods $U$ of the point $t_0$ and $K^\circ$ of the point $x_0$ for which \eqref{ko} is satisfied. This proves the continuity of the binary operation $f^{-1}=\{f^{-1}_t\}$.

To complete the proof of the theorem it remains to observe that the continuous binary operation $f^{-1}:X^2\to X$ is inverse to $f:X^2\to X$, which can be verified easily. Theorem \ref{th011} is proved.
\end{proof}

Theorems \ref{th01} and \ref{th011} imply the following invertibility criterion of continuous binary operations on locally compact and locally connected spaces.

\begin{theorem}\label{th0111}
Let $X$ be a locally compact and locally connected space. A continuous binary operation $f=\{f_t\} :X^2\to X$ is invertible if and only if the continuous map $f_t:X\to X$ is a homeomorphism for any $t\in X$.
\end{theorem}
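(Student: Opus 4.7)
The plan is to observe that Theorem \ref{th0111} is an immediate corollary of Theorems \ref{th01} and \ref{th011}, so essentially no new argument is required; the proof reduces to a brief citation of each direction combined with a remark on where the hypotheses are actually used.

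For the necessity (``only if'') direction, I would simply invoke Theorem \ref{th01}: if $f=\{f_t\}\in C_2(X)$ is invertible with inverse $f^{-1}=\{g_t\}$, then $f_t\circ g_t = g_t\circ f_t = \mathrm{id}_X$ for each $t\in X$, so every slice $f_t:X\to X$ is a homeomorphism. This direction does not use local compactness or local connectedness, and in fact holds for any Hausdorff space $X$.

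For the sufficiency (``if'') direction, I would appeal to Theorem \ref{th011}: under the assumption that $X$ is locally compact and locally connected and that each $f_t$ is a homeomorphism, the map $f^{-1}(t,x)=f_t^{-1}(x)$ is continuous on $X^2$ and is a two-sided inverse of $f$ under the composition $\circ$ of \eqref{eq4}. This is where the topological hypothesis on $X$ is genuinely consumed, through the connectedness-and-compactness argument used there to construct the neighborhood $U$ of $t_0$ and the compact connected neighborhood $K$ of $x_0$.

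I do not anticipate any obstacle in assembling the argument, since all the substantive work has been done in the proofs of Theorems \ref{th01} and \ref{th011}. The only point worth emphasizing in writing it up is the asymmetric role of the hypothesis: the forward implication is free, while the reverse implication is exactly the place where local compactness and local connectedness are indispensable, which is what elevates the two implications to a full invertibility criterion on the class of spaces under consideration.
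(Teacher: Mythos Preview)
Your proposal is correct and matches the paper's own treatment exactly: the paper presents Theorem~\ref{th0111} as an immediate consequence of Theorems~\ref{th01} and~\ref{th011} without any additional argument. Your extra remark about the asymmetric use of the local compactness and local connectedness hypotheses is accurate and adds a bit of context the paper itself omits.
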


\section{Classification Of Groups Of Invertible Binary Operations}

The next proposition shows that the groups of invertible continuous binary operations are natural extensions of the group of homeomorphisms.

\begin{proposition}\label{prop-1}
The group $H(X)$ of all homeomorphisms of a topological space $X$ is isomorphic (algebraically and topologically) to a subgroup of the group $H_2(X)$ of invertible binary operations. 
\end{proposition}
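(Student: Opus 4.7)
The plan is to construct an embedding $\Phi : H(X) \hookrightarrow H_2(X)$ by sending a homeomorphism $h : X \to X$ to the ``$t$-independent'' binary operation $\bar{h} \in C_2(X)$ defined by
\begin{equation*}
\bar{h}(t,x) = h(x), \qquad t,x \in X,
\end{equation*}
i.e.\ the constant family $\bar{h} = \{\bar{h}_t\}$ with $\bar{h}_t = h$ for every $t \in X$. Since $\bar{h}$ factors as the projection $\pi_2 : X^2 \to X$ followed by $h$, it is continuous, so $\bar{h} \in C_2(X)$.

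Next I would verify the algebraic part. The composition formula \eqref{eq4} gives
\begin{equation*}
(\bar{h} \circ \bar{g})(t,x) = \bar{h}(t,\bar{g}(t,x)) = h(g(x)) = \overline{hg}(t,x),
\end{equation*}
so $\Phi$ is a group homomorphism. In particular $\bar{h} \circ \overline{h^{-1}} = \overline{h h^{-1}} = e$ and similarly on the other side, which shows that $\bar{h} \in H_2(X)$ and that $(\bar{h})^{-1} = \overline{h^{-1}}$ (notice that this does \emph{not} require the hypotheses of Theorem~\ref{th011}; invertibility of $\bar h$ is visible directly). Injectivity is immediate: if $\bar{h} = \bar{g}$, evaluating at any $(t,x)$ gives $h(x) = g(x)$, so $h = g$.

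For the topological part, recall that both groups carry compact-open topologies ($H(X) \subset C(X,X)$ and $H_2(X) \subset C(X^2,X)$), and it suffices to check that $\Phi$ is a homeomorphism onto its image on the level of the subbases $W(K,U)$. For continuity of $\Phi$, a subbasic open set $W(L,U) \subset C(X^2,X)$ with $L \subset X^2$ compact pulls back to
\begin{equation*}
\Phi^{-1}\bigl(W(L,U)\bigr) = \{h \in H(X) : h(\pi_2(L)) \subset U\} = W(\pi_2(L), U) \cap H(X),
\end{equation*}
which is open since $\pi_2(L)$ is compact. For continuity of $\Phi^{-1} : \Phi(H(X)) \to H(X)$, given a subbasic open $W(K,U) \subset C(X,X)$ with $K \subset X$ compact, I would fix any point $t_0 \in X$ and observe that $\{t_0\} \times K \subset X^2$ is compact and
\begin{equation*}
\Phi\bigl(W(K,U)\bigr) = \Phi(H(X)) \cap W(\{t_0\} \times K,\, U),
\end{equation*}
which is open in the subspace topology on $\Phi(H(X))$. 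Combined with the homomorphism/injectivity checks, this establishes that $\Phi$ is an isomorphism of topological groups onto its image.

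The routine parts are the algebraic identities; the main (mild) obstacle is the topological direction, specifically showing that $\Phi^{-1}$ is continuous, since one must recover the topology of $H(X)$ from restrictions on $X^2$. The key trick is simply that a compact set $K \subset X$ can be lifted to the compact slice $\{t_0\} \times K \subset X^2$, after which the $t$-independence of $\bar h$ makes the two subbasic conditions match exactly.
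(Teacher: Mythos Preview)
Your proof is correct and uses exactly the same embedding $h\mapsto\bar h$ (the paper's $\tilde f$) as the paper; the algebraic checks coincide. You supply considerably more detail than the paper does---in particular the compact-open continuity arguments for $\Phi$ and $\Phi^{-1}$, which the paper simply asserts---so your version is a strict elaboration of the same approach.
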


\begin{proof}
To each  $f\in H(X)$ we associate a continuous map $\tilde{f}:X^2\to X$, defined by $\tilde{f}(t, x)=f(x)$, $t,x\in X$. It is clear that $\widetilde{f^{-1}}=\tilde{f}^{-1}$. Hence $\tilde{f}$ is a continuous invertible binary operation, that is, $\tilde{f}\in H_2(X)$. The correspondence $f \to \tilde{f}$ is the desired isomorphism between the group $H(X)$ and a subgroup of $H_2(X)$. Proposition \ref{prop-1} is proved.
\end{proof}

The next theorem contains a solution of the problem of classification of groups of invertible continuous binary operations of locally compact and locally connected spaces by means groups of homeomorphisms.

\begin{theorem}\label{th1}
Let $X$ be a locally compact and locally connected space. Then the group  $H_2(X)$ is isomorphic (algebraically and topologically) to the group $C(X,H(X))$.
\end{theorem}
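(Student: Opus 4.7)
The natural candidate is the map $\Phi\colon H_2(X)\to C(X,H(X))$ sending $f=\{f_t\}$ to the assignment $t\mapsto f_t$. By Theorem \ref{th01}, since $f$ is invertible each $f_t$ is a homeomorphism, so $\Phi(f)$ indeed lands in $H(X)$; continuity of $t\mapsto f_t$ as a map $X\to C(X,X)$ follows from the classical exponential-law homeomorphism $C(X^2,X)\cong C(X,C(X,X))$, which applies because $X$ is locally compact Hausdorff. Under our hypotheses, $H(X)$ carries the compact-open topology as a subspace of $C(X,X)$ and is a topological group by Theorem \ref{th_arens}, so $\Phi(f)$ is continuous into $H(X)$. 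For bijectivity, I would exhibit the inverse via the same exponential correspondence: given $g\in C(X,H(X))$, set $f(t,x)=g(t)(x)$; this is continuous on $X^2$ and each slice $f_t=g(t)$ is a homeomorphism, so Theorem \ref{th011} forces $f\in H_2(X)$, with $\Phi(f)=g$.

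To see that $\Phi$ is a group homomorphism, note that formula \eqref{eq4} gives $(f\circ\varphi)_t(x)=f_t(\varphi_t(x))$, so $(f\circ\varphi)_t=f_t\circ\varphi_t$ in $H(X)$. Because the operation on $C(X,H(X))$ is the pointwise product $(gh)(t)=g(t)h(t)$ coming from Theorem \ref{th_McCoy}, and the operation on $H(X)$ is composition, this yields $\Phi(f\circ\varphi)(t)=\Phi(f)(t)\cdot\Phi(\varphi)(t)$, as required.

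It remains to show $\Phi$ is a homeomorphism. This I would deduce from the fact that the exponential-law bijection $\Lambda\colon C(X^2,X)\to C(X,C(X,X))$ is itself a homeomorphism (again using local compactness of $X$): restricting $\Lambda$ to $H_2(X)$, endowed with the subspace topology from $C_2(X)$, yields a homeomorphism onto the image, which is precisely the subset of $C(X,C(X,X))$ consisting of maps whose values lie in $H(X)$. The last ingredient is the identification of this subset with $C(X,H(X))$ as topological spaces, which holds because $H(X)$ carries the subspace topology from $C(X,X)$ and a basic sub-basic open set $W(K,V\cap H(X))$ in $C(X,H(X))$ equals the trace of $W(K,V)$.

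The main obstacle is the last identification together with the careful invocation of the exponential law, since the argument hinges on the compatibility between (i) the compact-open topology on $C_2(X)=C(X^2,X)$, (ii) the compact-open topology on $C(X,C(X,X))$, and (iii) Arens' topological-group topology on $H(X)$ coinciding, under our hypotheses, with the subspace topology from $C(X,X)$. Once these three compatibilities are in place, the rest of the proof reduces to routine verification.
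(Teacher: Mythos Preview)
Your proposal is correct and follows essentially the same route as the paper: the paper defines the inverse map $p=\Phi^{-1}$, checks bijectivity and the homomorphism identity exactly as you do, and then verifies continuity of $p$ and $p^{-1}$ by showing directly that $p^{-1}\bigl(W(K\times K',U)\bigr)=W\bigl(K,W(K',U)\bigr)$, which is precisely the computation underlying the exponential-law homeomorphism you invoke as a black box. The only cosmetic difference is that the paper unpacks this subbasic-set calculation by hand rather than citing the exponential law, so your argument and the paper's are the same proof at different levels of packaging.
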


\begin{proof}
Consider the map $p:C(X,H(X))\to H_2(X)$ defined by 
\begin{equation*}
p(f)(t,x)=f(t)(x),
\end{equation*}
for $f\in C(X,H(X))$ and  $t,x\in X$. Since for every $t\in X$ the map $f(t) : X \to X$ is a homeomorphism, by Theorem \ref{th0111} the binary operation $p(f):X\times X\to X$ is invertible, that is, it belongs to the group $H_2(X)$.

Now we show that $p$ is a monomorphism. To this end, we take $f,g\in C(X,H(X))$ such that $f\neq g$, and observe that there exists a point $t_0\in X$ such that $f(t_0)\neq g(t_0)$. Since $f(t_0), g(t_0) \in H(X)$, it follows that $f(t_0)(x_0)\neq g(t_0)(x_0)$ for some point $x_0\in X$. Thus, $p(f)(t_0,x_0)\neq p(g)(t_0,x_0)$, implying that $p(f)\neq p(g)$.

Next, observe that the map $p$ is also an epimorphism. Indeed, let $\varphi \in H_2(X)$ be any continuous binary operation. Then by Theorem \ref{th0111}, the map $\varphi_t:X\to X$ defined by $\varphi_t(x)=\varphi(t,x)$, $t, x\in X$, is a homomorphism. It is easy to see that the element $f\in  C(X,H(X))$, determined by the equality $f(t)=\varphi_t$ , is the preimage of the binary operation $\varphi$: $p(f)(t,x)=f(t)(x)=\varphi_t(x)=\varphi(t,x)$.

Thus, the map $p^{-1}:H_2(X)\to C(X,H(X))$ defined by 
$$p^{-1}(\varphi)(t)(x)= \varphi(t,x),$$
for $\varphi \in H_2(X)$ and $t,x\in X$, is inverse to $p:C(X,H(X))\to H_2(X)$.

The map $p$ is a homomorphism, that is, $p(f\circ g)=p(f)\circ p(g)$. Indeed, for any $t,x\in X$ we have 
\begin{multline*}
p(f\circ g)(t,x)=(f\circ g)(t)(x)= (f(t)\circ g(t))(x)=f(t)(g(t)(x))=\\
=f(t)(p(g)(t,x))=p(f)(t,p(g)(t,x))=(p(f)\circ p(g))(t,x).
\end{multline*}

Now we prove the continuity of $p$. Let $W(K\times K', U)$ be any element of the subbase of the compact-open topology on $H_2(X)$, where $U\subset X$ is open and $K,K'\subset X$ are compact subsets of the space $X$. We show that the preimage of the set $W(K\times K', U)$ is the set $W(K, W(K', U))$, which is an element of the subbase of the compact-open topology on $C(X,H(X))$. Indeed, for any $\varphi \in W(K\times K', U)$ and $f=p^{-1}(\varphi)\in  C(X,H(X))$ we have
\begin{multline*}
\varphi \in W(K\times K', U) \iff \varphi(t,x)\in U \iff p(f)(t,x)\in U \iff \\
\iff f(t)(x)\in U \iff f\in W(K, W(K', U)),
\end{multline*}
where $t\in K$ and $x\in K'$ are arbitrary elements, and the continuity of $p$ follows.

The continuity of the inverse map $p^{-1}:H_2(X)\to C(X,H(X))$ can be shown similarly. Theorem \ref{th1} is proved.
\end{proof}

The group of invertible continuous binary operations $H_2(X)$ generally is not a topological group. However, the following results hold.

\begin{corollary}
If $X$ is a locally compact and locally connected space, then $H_2(X)$ is a topological group.
\end{corollary}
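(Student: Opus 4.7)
The plan is to chain together the three main theorems already established in the excerpt. By Theorem \ref{th1}, the hypothesis of local compactness and local connectedness gives an algebraic and topological isomorphism $H_2(X)\cong C(X,H(X))$, so it suffices to show that the right-hand side is a topological group.

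First, I would invoke Theorem \ref{th_arens}: since $X$ is locally compact and locally connected, the group $H(X)$ of homeomorphisms of $X$ (with the compact-open topology) is a topological group. Then, with $G=H(X)$, Theorem \ref{th_McCoy} applies and yields that $C(X,H(X))$, equipped with its pointwise group operation and the compact-open topology, is a topological group.

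Finally, transferring the topological group structure across the isomorphism of Theorem \ref{th1} gives that $H_2(X)$ is itself a topological group. There is essentially no obstacle here: all the nontrivial work (the invertibility criterion of Theorem \ref{th0111}, the Arens theorem, the McCoy theorem, and the careful compact-open analysis in the proof of Theorem \ref{th1}) has already been done, and this corollary is a one-line consequence obtained by composing the three results.
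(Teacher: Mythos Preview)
Your proof is correct and matches the paper's own argument essentially verbatim: the paper also invokes Theorem~\ref{th_arens} to get that $H(X)$ is a topological group, then Theorem~\ref{th_McCoy} to conclude $C(X,H(X))$ is a topological group, and finally Theorem~\ref{th1} to transfer this to $H_2(X)$. The only difference is the order in which the three results are cited, which is immaterial.
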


\begin{proof}
By theorem \ref{th_arens}, $H(X)$ is a topological group. Therefore, by Theorem \ref{th_McCoy}, $C(X,H(X))$ is also a topological group. Now we can use Theorem \ref{th1} to conclude that $H_2(X)$ is a topological group.
\end{proof}

\begin{corollary}\label{cor2}
Let  $|X|=n<\infty$. Then $|H_2(X)|=(n!)^n$.
\end{corollary}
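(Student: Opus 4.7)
The plan is to reduce the counting problem to Theorem \ref{th1} and then apply elementary combinatorics. First, I would observe that a finite Hausdorff space is automatically discrete, and every discrete space is locally compact and locally connected. Thus the hypotheses of Theorem \ref{th1} are satisfied, and we obtain a (topological) group isomorphism $H_2(X) \cong C(X, H(X))$. It therefore suffices to compute the cardinality of $C(X, H(X))$.

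Next, I would identify $H(X)$ concretely. Since $X$ is discrete with $n$ elements, every bijection $X \to X$ is automatically continuous with continuous inverse, so $H(X) = S(X) = S_n(X)$, a finite set of size $n!$. Then, because $X$ is discrete, every map $X \to H(X)$ is continuous, so $C(X, H(X))$ coincides set-theoretically with the full set of functions from $X$ to $H(X)$. A standard count gives
\begin{equation*}
|C(X, H(X))| = |H(X)|^{|X|} = (n!)^n.
\end{equation*}

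Combining the isomorphism $H_2(X) \cong C(X, H(X))$ with this count yields $|H_2(X)| = (n!)^n$, as required. There is no real obstacle here: once Theorem \ref{th1} is invoked, the argument is purely a bookkeeping exercise, and the only thing worth flagging explicitly is that the finiteness of $X$ (together with Hausdorffness) forces the discrete topology, which is what simultaneously makes Theorem \ref{th1} applicable and turns both $H(X)$ and $C(X, H(X))$ into easily enumerable sets.
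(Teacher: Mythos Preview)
Your proof is correct and follows essentially the same route as the paper: invoke Theorem~\ref{th1} to identify $H_2(X)$ with $C(X,H(X))$, then use $H(X)=S_n(X)$ and count. If anything, you are more careful than the paper, since you explicitly verify the local compactness and local connectedness hypotheses and explain why every map $X\to H(X)$ is continuous.
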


\begin{proof}
For a finite set $X$, we have $H(X)=S_n(X)$, where $S_n(X)$ is the symmetric group of permutations of the set $X$. Taking into account that $|S_n(X)|=n!$, the result immediately follows from Theorem \ref{th1}.
\end{proof}

\section{Binary Distributive Representations Of Topological Groups}

\begin{definition}
A subgroup $D\subset H_2(X)$ is said to be distributive if for all $x, x', x'' \in X$ and for all $g, h\in D$ the following condition is fulfilled
\begin{equation}\label{eq-distr}
g(h(x,x'), h(x,x''))=h(x,g(x', x'')).
\end{equation}
\end{definition}

\begin{theorem}\label{th-7}
A subgroup $D\subset H_2(X)$ is distributive if and only if for any $g=\{g_t\}, h=\{h_{t'}\}\in D$, $t,t'\in X$, the following equality holds:
\begin{equation}\label{distribut-1}
g_t\circ h_{t'}=h_{g_{t}(t')}\circ g_t.
\end{equation}
\end{theorem}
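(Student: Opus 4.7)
The plan is to translate both halves of the claimed equivalence into the pointwise notation $f(a,b) = f_a(b)$ and then observe that, since the distributivity condition is required to hold for every ordered pair of elements of $D$, the two formulations differ only by a harmless interchange of the roles of $g$ and $h$.

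Concretely, I would first rewrite the distributivity identity \eqref{eq-distr}. The left hand side $g(h(x,x'), h(x,x''))$ equals $g_{h_x(x')}(h_x(x''))$, while the right hand side is $h_x(g_{x'}(x''))$; thus distributivity of $D$ is equivalent to the identity $g_{h_x(x')}(h_x(x'')) = h_x(g_{x'}(x''))$ holding for all $g,h\in D$ and all $x,x',x''\in X$. Similarly, evaluating \eqref{distribut-1} at an arbitrary point $y\in X$, the identity $g_t\circ h_{t'} = h_{g_t(t')}\circ g_t$ is equivalent to $g_t(h_{t'}(y)) = h_{g_t(t')}(g_t(y))$ for all $t,t',y\in X$.

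To finish, I would compare these two pointwise identities. Swapping the roles of $g$ and $h$ in the distributivity identity—which is legitimate because the hypothesis is imposed symmetrically on every ordered pair in $D\times D$—produces the relation $h_{g_x(x')}(g_x(x'')) = g_x(h_{x'}(x''))$, and relabelling $x,x',x''$ as $t,t',y$ gives precisely the pointwise form of \eqref{distribut-1}; this yields the forward implication. The converse is the same argument read in reverse: applying \eqref{distribut-1} with the pair $(h,g)$ in place of $(g,h)$ and relabelling recovers \eqref{eq-distr}. I do not anticipate any substantial obstacle; once the two conditions are expressed in the notation of \eqref{eq5}, the equivalence is a bookkeeping exercise, and the only point deserving explicit comment is the symmetry justification that allows the swap of $g$ and $h$.
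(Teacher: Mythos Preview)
Your proposal is correct and follows essentially the same route as the paper's proof: both translate \eqref{eq-distr} and \eqref{distribut-1} into the pointwise form $f(a,b)=f_a(b)$ and pass from one identity to the other via a swap of $g$ and $h$ together with a relabelling of variables. The only cosmetic difference is that the paper presents each implication as a single chain of equalities (using the swapped instance of the hypothesis without comment), whereas you make the symmetry step explicit; the mathematical content is identical.
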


\begin{proof}
Let $D\subset H_2(X)$ be a distributive subgroup. Then, in view of \eqref{eq-distr}, we obtain
\begin{multline*}
(g_t\circ h_{t'})(x)=g_t(h_{t'}(x))=g_t(h(t',x))=g(t, h(t',x))=h(g(t,t'), g(t,x))=\\
=h_{g(t,t')}(g(t,x))=h_{g_t(t')}(g_t(x))=(h_{g_t(t')}\circ g_t)(x)
\end{multline*}
for any $x\in X$, and hence the equality \eqref{distribut-1} is satisfied.

Now assume that \eqref{distribut-1} is satisfied. Then for any $g,h\in G$ and $t,t',x\in X$ we can write %
\begin{multline*}
h(g(t,t'), g(t,x))=h_{g(t,t')}(g(t,x))=h_{g_t(t')}(g_t(x))=(h_{g_t(t')}\circ g_t)(x)=\\
=(g_t\circ h_{t'})(x)=g_t(h_{t'}(x))=g_t(h(t',x))=g(t, h(t',x)),
\end{multline*}
implying that $D$ is a distributive subgroup. Theorem \ref{th-7} is proved.
\end{proof}

The groups of invertible continuous binary operations are sufficiently rich by distributive subgroups. Moreover, every topological group can be considered as a distributive subgroup of a suitable chosen group of invertible continuous binary operations.

\begin{theorem}[on binary distributive representation of a topological group]\label{th-8}
Every topological group is a distributive subgroup of some group of invertible binary operations.
\end{theorem}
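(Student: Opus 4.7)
The plan is to take $X = G$ and to embed the given topological group $G$ into $H_2(G)$ via the map $\Phi\colon G\to H_2(G)$, $g\mapsto \bar g$, where
\[
\bar g(t, x) := t\,g\,t^{-1} x, \qquad t, x \in G.
\]
The motivation for this formula is that a constant-in-$t$ choice such as $(t,x)\mapsto gx$ satisfies \eqref{eq-distr} only when $G$ is abelian, so the inner automorphism $g\mapsto tgt^{-1}$ must be built into the operation.

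The first step is to check that $\bar g$ indeed lies in $H_2(G)$. Continuity of $\bar g$ is immediate from the continuity of multiplication and inversion on $G$, and invertibility is verified directly by the computation $(\bar g\circ \overline{g^{-1}})(t,x)=tgt^{-1}\cdot tg^{-1}t^{-1}x=x$; this bypasses Theorem \ref{th011}, which would restrict us to locally compact and locally connected $G$. The same type of calculation gives $\bar g\circ \bar h = \overline{gh}$, so $\Phi$ is a group homomorphism, while $\bar g(e,e)=g$ shows that $\Phi$ is injective. Hence $\bar G:=\Phi(G)$ is a subgroup of $H_2(G)$ algebraically isomorphic to $G$.

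Next I would verify that $\Phi$ is a topological embedding. For continuity, if $W(K,U)$ is a subbasic open set of $C_2(G)$ (with $K\subset G\times G$ compact and $U\subset G$ open), then
\[
\Phi^{-1}(W(K,U))=\{g\in G : \{g\}\times K\subset F^{-1}(U)\},
\]
where $F\colon G\times G\times G\to G$, $F(g,t,x)=tgt^{-1}x$, is continuous; the tube lemma applied to the compact set $K$ shows this preimage is open. For continuity of $\Phi^{-1}$ on $\bar G$, the identity $\bar g(e,e)=g$ yields $\bar G\cap W(\{(e,e)\},V)=\Phi(V)$ for every open $V\subset G$, so $\Phi$ is open onto its image.

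It remains to verify the distributivity condition \eqref{eq-distr} for $\bar G$. Setting $u=\bar h(x,x')=xhx^{-1}x'$ and $v=\bar h(x,x'')=xhx^{-1}x''$, a direct computation gives
\[
\bar g(\bar h(x,x'),\bar h(x,x''))=ugu^{-1}v=xhx^{-1}x'\,g\,(x')^{-1}x'',
\]
while
\[
\bar h(x,\bar g(x',x''))=xhx^{-1}\cdot x'g(x')^{-1}x''
\]
gives the same expression, so \eqref{eq-distr} holds. The only conceptual step of the proof is guessing the correct formula for $\bar g$; once this is found, everything else reduces to mechanical manipulations using the group axioms, continuity of multiplication and inversion, and a single application of the tube lemma.
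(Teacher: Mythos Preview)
Your proof is correct and follows essentially the same approach as the paper: the embedding $\bar g(t,x)=tgt^{-1}x$ is exactly the paper's map $i_g(h_1,h_2)=h_1gh_1^{-1}h_2$, and the verifications of invertibility, the homomorphism property, injectivity via $\bar g(e,e)=g$, and distributivity are the same computations. You supply more detail than the paper on the topological side---the tube-lemma argument for continuity of $\Phi$ and the observation $\bar G\cap W(\{(e,e)\},V)=\Phi(V)$ for openness onto the image---whereas the paper simply asserts that continuity follows from continuity of the group operations.
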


\begin{proof}
Let $G$ be a topological group. Consider the group of invertible binary operations $H_2(G)$ of
$G$, and define the map $i:G\to H_2(G)$, which to each element $g\in G$ associates the binary operation
$i_g\in H_2(G)$, defined by 
$$i_g(h_1,h_2)=h_1gh_1^{-1}h_2,$$
where $g,h_1,h_2\in G$.

Since $i_g(e,e)=g$ for any $g\in G$, where $e$ is the identity element of the group $G$, the map $i$ is a monomorphism.

The map $i$ is also a homomorphism. Indeed, we have
\begin{multline*}
i_{gk}(h_1,h_2)=h_1gkh_1^{-1}h_2=h_1gh_1^{-1}h_1kh_1^{-1}h_2=i_g(h_1, h_1kh_1^{-1}h_2)= \\
=i_g(h_1, i_k(h_1,h_2))=[i_g\circ i_k] (h_1,h_2),
\end{multline*}
where $g,k,h_1,h_2 \in G$.

The continuity of the map $i$ follows from the continuity of operations $(g,h)\to gh$ and $g\to g^{-1}$ for all
$g,h \in G$. Thus, $i$ is an isomorphism of the group $G$ on its image.

Observe that $i(G)$ is a distributive subgroup of the group $H_2(G)$. Indeed, for any $g,h,k,k_1,k_2\in G$
we have the following chain of equalities:
\begin{multline*}
i_g(i_h(k,k_1), i_h(k,k_2))=i_g(khk^{-1}k_1, khk^{-1}k_2)=khk^{-1}k_1gk_1^{-1}kh^{-1}k^{-1}khk^{-1}k_2=\\
=khk^{-1}k_1gk_1^{-1}k_2=i_h(k,k_1gk_1^{-1}k_2)=i_h(k,i_g(k_1, k_2)),
\end{multline*}
and the result follows. Theorem \ref{th-8} is proved.
\end{proof}

Note that Theorem \ref{th-8} is a binary topological version of the Cayley’s classical theorem on representation of an arbitrary finite group by unary operations (permutations).

\end{document}